\newtheorem{theorem}{theorem}[section]
\newtheorem{thm}[theorem]{Theorem}
\newtheorem{lem}[theorem]{Lemma}
\newtheorem{rmk}[theorem]{Remark}
\newtheorem{nota}[theorem]{Notation}
\begin{document}

\title{\textbf{Trace-free ${\rm SL}(2,\mathbb{C})$-representations of Montesinos links}}
\author{\Large Haimiao Chen
\footnote{Email: \emph{chenhm@math.pku.edu.cn}. The author is supported by NSFC-11401014.} \\
\normalsize \em{Beijing Technology and Business University, Beijing, China}}
\date{}
\maketitle

\begin{abstract}
  Given a link $L$, a representation $\pi_1(S^{3}-L)\to {\rm SL}(2,\mathbb{C})$ is {\it trace-free} if the image of each meridian has trace zero.
  We determine the conjugacy classes of trace-free representations when $L$ is a Montesinos link.

  \medskip
  \noindent {\bf Keywords:}  trace-free representation, rational tangle, Montesinos link.\\
  {\bf 2010 Mathematics Subject Classification:} 57M25, 57M27.
\end{abstract}

\section{Introduction}

Given a link $L\subset S^{3}$ and a linear group $G$, a {\it trace-free} (or {\it traceless}) $G$-representation of $L$ means a homomorphism $\pi_1(S^3-L)\to G$ sending each meridian to an element of trace zero. Dated back to 1980,
Magnus \cite{Mag80} used trace-free ${\rm SL}(2,\mathbb{C})$-representations to prove the faithfulness of a representation of braid groups in the automorphism groups of the rings generated by the characters functions on free groups.
Lin \cite{Lin92} used trace-free ${\rm SU}(2)$-representations to define a Casson-type invariant of a knot $K$, and showed it to equal half of the signature of $K$.
More interestingly, Kronheimer and Mrowka \cite{KM11} observed that for some knots $K$, its Khovanov homology is isomorphic to the ordinary homology of the space $R(K)$ of conjugacy classes of trace-free representations of $K$. In this context, Zentner \cite{Zen11} determined $R(K)$ when $K$ belongs to a class of classical pretzel knots. For related works, one may refer to \cite{FKC13}, \cite{HHK13}, etc.

There are relatively few results on trace-free ${\rm SL}(2,\mathbb{C})$-representations.  For a knot $k$, Nagasato \cite{Nag13} gave a set of polynomials whose zero locus is exactly the trace-free characters of irreducible ${\rm SL}(2,\mathbb{C})$-representations of $K$. Nagasato and Yamaguchi \cite{NY12} investigated trace-free ${\rm SL}(2,\mathbb{C})$-representations of $\pi_1(\Sigma-K)$ (where $K$ is a knot in an integral 3-sphere $\Sigma$), and related to those of $\pi_1(B_2)$, where $B_2$ is the 2-fold cover of $\Sigma$ branched along $K$.

In this paper,  for each Montesinos link, we completely determine the trace-free ${\rm SL}(2,\mathbb{C})$-representations by given explicit formulas.

\bigskip

Let ${\rm SL}_{0}(2,\mathbb{C})=\{X\in{\rm SL}(2,\mathbb{C})\colon {\rm tr}(X)=0\}$.
Note that each $X\in{\rm SL}_{0}(2,\mathbb{C})$ satisfies $X^{-1}=-X$.

By a ``tangle" we simultaneously mean an unoriented tangle diagram and the tangle it represents.
Given a tangle $T$, let ${\rm Dar}(T)$ denote the set of directed arcs of $T$, (each arc gives two directed arcs).
By a {\it (trace-free) representation} of a tangle $T$, we mean a map $\rho:{\rm Dar}(T)\to {\rm SL}_{0}(2,\mathbb{C})$ such that $\rho(x^{-1})=\rho(x)^{-1}$ for each $x\in {\rm Dar}(T)$ and at each crossing illustrated in Figure \ref{fig:crossing}, $\rho(z)=\rho(x)\rho(y)\rho(x)^{-1}$.
To present such a representation, it is sufficient to give each arc a direction and label an element of ${\rm SL}_{0}(2,\mathbb{C})$ beside it.

\begin{figure} [h]
  \centering
  \includegraphics[width=0.3\textwidth]{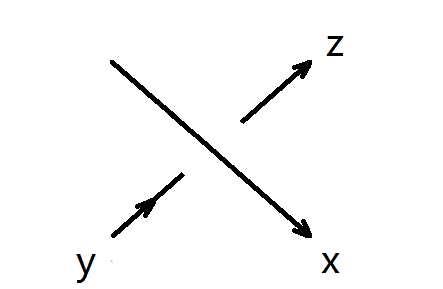}\\
  \caption{A representation satisfies $\rho(z)=\rho(x)\rho(y)\rho(x)^{-1}$ at each crossing} \label{fig:crossing}
\end{figure}

\begin{figure} [h]
  \centering
  \includegraphics[width=0.33\textwidth]{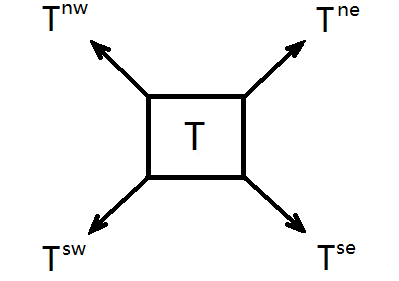}\\
  \caption{A tangle $T\in\mathcal{T}_2^2$, with the four ends directed outwards} \label{fig:general}
\end{figure}

Let $\mathcal{T}_2^2$ denote the set of tangles $T$ with four ends which, when directed outwards, are denoted by $T^{{\rm nw}}, T^{{\rm sw}}, T^{{\rm ne}}, T^{{\rm se}}$, as shown in Figure \ref{fig:general}. The simplest four ones are given in Figure \ref{fig:basic}. In $\mathcal{T}_2^2$ there are two binary operations: {\it horizontal composition} $\ast$ and {\it vertical composition} $\star$; see Figure \ref{fig:composition}.

\begin{figure} [h]
  \centering
  \includegraphics[width=0.8\textwidth]{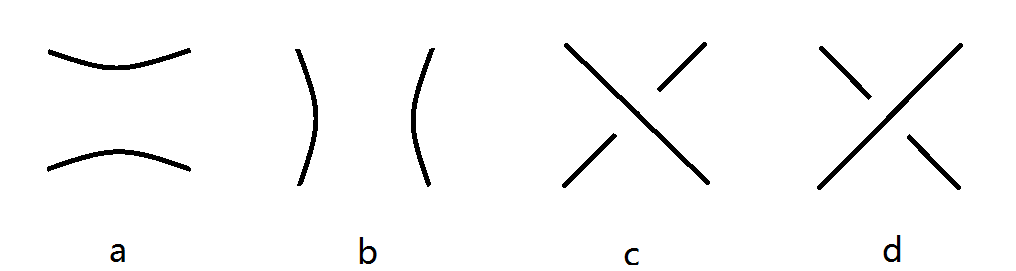}\\
  \caption{The simplest four tangles: (a) $[0]$, (b) $[\infty]$, (c) $[1]$, (d) $[-1]$}\label{fig:basic}
\end{figure}

\begin{figure} [h]
  \centering
  \includegraphics[width=0.6\textwidth]{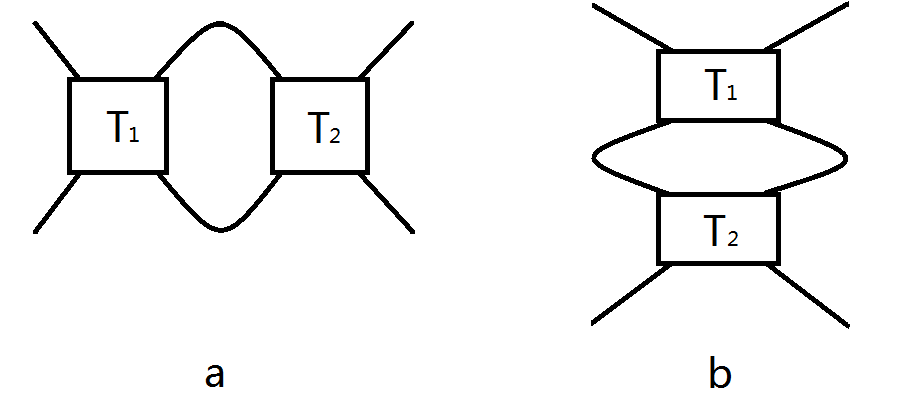}\\
  \caption{(a) $T_1\ast T_2$; (b) $T_1\star T_2$} \label{fig:composition}
\end{figure}

For $k\ne 0$, the horizontal composite of $|k|$ copies of $[1]$ (resp. [-1]) is denoted by $[k]$ if $k>0$ (resp. $k<0$), and the vertical composite of $|k|$ copies of $[1]$ (resp. [-1]) is denoted by $[1/k]$ if $k>0$ (resp. $k<0$).

Given integers $k_{1},\ldots,k_{m}$, we define the \emph{rational tangle}
\begin{align}
[[k_{1}],\ldots,[k_{m}]]=\begin{cases}
[k_{1}]\star [1/k_{2}]\ast\cdots\star [1/k_{m}], &\text{if\ \ } 2\mid m, \\
[k_{1}]\star [1/k_{2}]\ast\cdots\ast [k_{m}], &\text{if\ \ } 2\nmid m,
\end{cases}
\end{align}
and its {\it fraction}
\begin{align}
f([[k_{1}],\ldots,[k_{m}]])=[[k_{1},\ldots,k_{m}]]^{(-1)^{m-1}},  \label{eq:fraction}
\end{align}
where the \emph{continued fraction} $[[k_{1},\ldots,k_{m}]]\in\mathbb{Q}$ is defined inductively as
\begin{align}
[[k_{1}]]=k_{1}, \qquad
[[k_{1},\ldots,k_{m}]]=k_{m}+1/[[k_{1},\ldots,k_{m-1}]].
\end{align}
Denote $[[k_{1}],\ldots,[k_{m}]]$ as $[p/q]$ if the right-hand side of (\ref{eq:fraction}) equals $p/q$.

A \emph{Montesinos tangle} is a tangle of the form
$T=[p_{1}/q_{1}]\star\cdots\star [p_{r}/q_{r}]$. The link obtained by connecting $T^{{\rm nw}}$ and $T^{{\rm ne}}$ with $T^{{\rm sw}}$ and
$T^{{\rm se}}$, respectively, is called a {\it Montesinos link} and denoted by $M(p_1/q_1,\ldots,p_r/q_r)$.

Given a representation $\rho$ of $T\in\mathcal{T}_2^2$, denote
\begin{align}
\rho^{{\rm nw}}=\rho(T^{{\rm nw}}), \qquad &\rho^{{\rm sw}}=\rho(T^{{\rm sw}}), \qquad
\rho^{{\rm ne}}=\rho(T^{{\rm ne}}), \qquad \rho^{{\rm se}}=\rho(T^{{\rm se}}), \\
{\rm tr}_{v}(\rho)&={\rm tr}(\rho^{{\rm ne}}\rho^{{\rm se}}), \qquad
{\rm tr}_{h}(\rho)={\rm tr}(\rho^{{\rm sw}}\rho^{{\rm se}}).
\end{align}

\begin{rmk} \label{rmk:trace-v-h}
\rm Take a 3-ball $\mathcal{B}$ containing $T$ such that $\partial\mathcal{B}$ intersects $T$ precisely at its end points. Then $\pi_1(\mathcal{B}-T)$ has a presentation of Wirtinger type, and $\rho$ can be identified with a representation $\pi_1(\mathcal{B}-T)\to{\rm SL}_0(2,\mathbb{C})$; (this is true for all tangle). Noting that the element of $\pi_1(\mathcal{B}-T)$ presented by $T^{{\rm nw}}T^{{\rm sw}}$ is conjugate to that presented by $T^{{\rm ne}}T^{{\rm se}}$, we also have ${\rm tr}_{v}(\rho)={\rm tr}(\rho^{{\rm nw}}\rho^{{\rm sw}})$.
Similarly ${\rm tr}_{h}(\rho)={\rm tr}(\rho^{{\rm nw}}\rho^{{\rm ne}})$.
\end{rmk}

\section{Representations of rational tangles}

Suppose $[[k_{1},\ldots,k_{m}]]=[p/q]$ with $p,q\ne 0$.
For a representation $\rho$ of $[p/q]$, let
$X,Y,X_{(j)},Y_{(j)}$ denote the elements that $\rho$ assigns to the directed arcs shown in Figure \ref{fig:rational}.
Call $(X,Y)$ the {\it generating pair} of $\rho$, indicating that $\rho$ is determined by $X$ and $Y$.

\begin{figure} [h]
  \centering
  \includegraphics[width=0.45\textwidth]{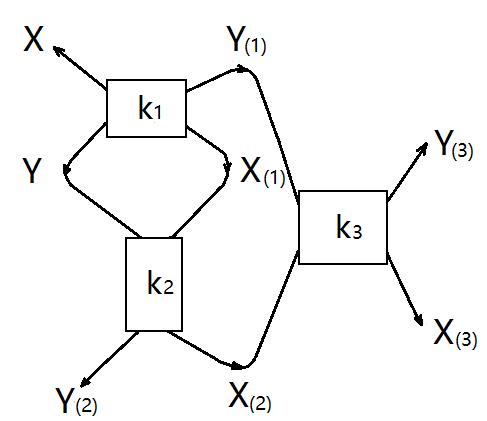} \\
  \caption{A representation of the rational tangle $[[k_{1}],[k_{2}],[k_{3}]]$} \label{fig:rational}
\end{figure}

Suppose ${\rm tr}(XY)=t$. Then $XYX^{-1}=-tX-Y$, hence each $X_{(j)}$ or $Y_{(j)}$ can be expressed as $\alpha(t)X+\beta(t)Y$, where $\alpha(t),\beta(t)$ are polynomials in $t$ that do not depend on $X,Y$.
We take a clever approach to derive formulas for the coefficients $\alpha(t),\beta(t)$.

For $a\in\mathbb{C}^{\times}$, put
\begin{align}
A(a)&=\frac{1}{2}\left(\begin{array}{cc}
(a+a^{-1})i & a-a^{-1} \\
a-a^{-1} & -(a+a^{-1})i \\
\end{array}\right).
\end{align}

\begin{lem}  \label{lem:standard}
{\rm(i)} If $\rho$ is a representation of $[k]$ with $\rho^{{\rm nw}}=A(a_1)$ and $\rho^{{\rm sw}}=A(a_2)$,
then $\rho^{{\rm ne}}=A(-a_1^{k+1}/a_2^k)$ and $\rho^{{\rm se}}=A(-a_1^{k}/a_2^{k-1})$.
\medskip

{\rm(ii)} If $\rho$ is a representation of $[1/k]$ with
$\rho^{{\rm nw}}=A(b_1)$ and $\rho^{{\rm ne}}=A(b_2)$,
then $\rho^{{\rm sw}}=A(-b_1^{k+1}/b_2^k)$ and $\rho^{{\rm se}}=A(-b_1^k/b_2^{k-1})$.
\end{lem}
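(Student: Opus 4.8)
The plan is to reduce the whole statement to one algebraic identity about the matrices $A(a)$ and then push that identity through the inductive structure of $[k]$ and $[1/k]$.

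\emph{Step 1: the key identity.} I would first record the elementary facts ${\rm tr}\,A(a)=0$, $\det A(a)=1$, and $A(a)^{-1}=-A(a)=A(-a)$, and then prove the one statement that does all the work:
\[
A(a)\,A(b)\,A(a)^{-1}=A(a^{2}/b),\qquad a,b\in\mathbb{C}^{\times}.
\]
One can verify this by a direct $2\times 2$ multiplication, but it is cleaner to expose the parametrization hidden in the definition of $A$: setting $a=e^{-i\phi}$ and $R(\phi)=\left(\begin{smallmatrix}\cos\phi&-\sin\phi\\ \sin\phi&\cos\phi\end{smallmatrix}\right)$, one checks $A(a)=A(1)R(\phi)$, where $A(1)=\left(\begin{smallmatrix}i&0\\ 0&-i\end{smallmatrix}\right)$, together with $A(1)^{2}=-I$ and the conjugation rule $A(1)R(\phi)A(1)^{-1}=R(-\phi)$. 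Since the $R(\phi)$ form a commutative one-parameter group, the identity then falls out by commuting the $A(1)$'s past the $R$'s: with $b=e^{-i\psi}$,
\[
\begin{aligned}
A(a)A(b)A(a)^{-1}&=A(1)R(\phi)\,A(1)R(\psi)\,\bigl(A(1)R(\phi)\bigr)^{-1}\\
&=A(1)R(2\phi-\psi)=A(e^{-i(2\phi-\psi)})=A(a^{2}/b).
\end{aligned}
\]

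\emph{Step 2: propagating through the tangle.} The tangle $[k]$ is a horizontal chain of crossings, so a representation is rigidly determined by $\rho^{{\rm nw}}$ and $\rho^{{\rm sw}}$, and I would prove (i) by induction on $|k|$. The base case $k=0$ is the two-strand tangle of Figure~\ref{fig:basic}(a), where $\rho^{{\rm ne}}=(\rho^{{\rm nw}})^{-1}=A(-a_{1})$ and $\rho^{{\rm se}}=(\rho^{{\rm sw}})^{-1}=A(-a_{2})$, which is exactly the asserted formula at $k=0$. The inductive step rests on a single-crossing computation: for $[1]$ with west labels $A(c_{1}),A(c_{2})$, the overarc joins ${\rm nw}$ to ${\rm se}$, so its two outward ends carry reciprocal values and $\rho^{{\rm se}}=(\rho^{{\rm nw}})^{-1}=A(-c_{1})$; the understrand joins ${\rm sw}$ to ${\rm ne}$, and the crossing relation of Figure~\ref{fig:crossing} makes the outgoing understrand the incoming one conjugated by the overarc, so by Step~1
\[
\rho^{{\rm ne}}=A(\pm c_{1})\,(\rho^{{\rm sw}})^{-1}\,A(\pm c_{1})^{-1}=A\bigl((\pm c_{1})^{2}/(-c_{2})\bigr)=A(-c_{1}^{2}/c_{2});
\]
the crossing $[-1]$ gives the same formula with the two strands exchanged. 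Writing $[k]=[k\mp 1]\cdot[\pm 1]$ and using that in a horizontal composite the outward ${\rm ne}$-label of the left factor and the outward ${\rm nw}$-label of the right factor point oppositely, hence are interchanged by inversion (and likewise for ${\rm se}$ versus ${\rm sw}$), one substitutes the inductive hypothesis into the single-crossing formula; the exponents collapse to give $\rho^{{\rm ne}}=A(-a_{1}^{k+1}/a_{2}^{k})$ and $\rho^{{\rm se}}=A(-a_{1}^{k}/a_{2}^{k-1})$. Part (ii) is proved in exactly the same way: $[1/k]$ is the vertical stack of $|k|$ copies of $[1]$, the $90^{\circ}$-rotated analogue of $[k]$, so the single-crossing analysis and the induction go through verbatim with the pair $({\rm sw},{\rm ne})$ playing the role that $({\rm ne},{\rm sw})$ played above.

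\emph{The main obstacle.} None of the individual steps is deep — Step~1 is a short computation and the induction in Step~2 is mechanical — so the real work is the orientation bookkeeping: consistently deciding which arc of $[\pm 1]$ is the overarc, which direction the relation $\rho(z)=\rho(x)\rho(y)\rho(x)^{-1}$ assigns to the incoming and outgoing understrands, and which outward-directed end of one factor is glued (with a reversal) to which end of the next. Because the identity of Step~1 is invariant under $A(a)\mapsto-A(a)$, the sign ambiguity in orienting the overarc at a crossing is harmless; but the orientation of the understrand genuinely affects the answer, and one must follow the conventions fixed in Figures~\ref{fig:crossing} and \ref{fig:rational} to land precisely the signs appearing in the statement.
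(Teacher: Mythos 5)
Your proposal is correct and follows essentially the same route as the paper: verify the single‑crossing case $[1]$ (where $\rho^{\rm se}=(\rho^{\rm nw})^{-1}$ and $\rho^{\rm ne}$ is obtained by conjugation, using the identity $A(a)A(b)A(a)^{-1}=A(a^{2}/b)$) and then iterate over the horizontal (resp.\ vertical) composite. Your treatment is somewhat more detailed — making the key conjugation identity, the $k=0$ base case, and the gluing/orientation conventions explicit — but it is the same proof.
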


\begin{proof}
(i) Suppose $\rho$ is a representation of $[1]$ with $\rho^{{\rm nw}}=A(a_1)$ and $\rho^{{\rm sw}}=A(a_2)$, then
$\rho^{{\rm se}}=A(-a_1)$, and computing directly,
$$\rho^{{\rm ne}}=\rho^{{\rm se}}(-\rho^{{\rm sw}})(\rho^{{\rm se}})^{-1}=A(-a_1)A(-a_2)A(-a_1)^{-1}=A(-a_1^2/a_2).$$
Applying this repeatedly, we obtain the result.

(ii) The proof is similar.
\end{proof}

Suppose $\rho$ is a representation of $[p/q]$, with $X=\rho^{{\rm nw}}=A(1)$ and $Y=A(s)$.
By Lemma \ref{lem:standard}, $Y_{(1)}=A(-s^{-k_{1}})$, $X_{(1)}=A(-s^{1-k_1})$, and in general, $Y_{(j)}=A(s_{(j)}), X_{(j)}=A(s'_{(j)})$ with
\begin{align*}
s_{(0)}=s, \qquad  s_{(1)}=-s^{-k_{1}}, \qquad  s'_{(1)}=-s^{1-k_{1}},  \\
\frac{s_{(j)}}{s_{(j-2)}}=\frac{s'_{(j)}}{s'_{(j-1)}}=\left(s_{(j-2)}/s'_{(j-1)}\right)^{k_j}, \qquad j\ge 2.
\end{align*}
Consequently, for $j\ge 2$,
\begin{align*}
s'_{(j)}=\frac{s_{(j)}s_{(j-1)}}{s_{(1)}s_{(0)}}s'_{(1)}=s_{(j)}s_{(j-1)}, \qquad
s_{(j)}=(s_{(j-1)})^{-k_{j}}s_{(j-2)}.
\end{align*}
Define $u_{j}, v_{j}$, $0\le j\le m$, inductively by
\begin{align*}
u_{0}&=0,  & &u_{1}=1,      & &u_{j+1}=k_{j+1}u_{j}+u_{j-1},  \ \ (j\ge 1), \\
v_{0}&=1,  & &v_{1}=k_{1},  & &v_{j+1}=k_{j+1}v_{j}+v_{j-1},  \ \ \ (j\ge 1),
\end{align*}
so that $u_{j}/u_{j-1}=[[k_{2},\ldots,k_{j}]]$, $v_{j}/v_{j-1}=[[k_{1},\ldots,k_{j}]]$.
Then
\begin{align}
Y_{(j)}=A\left((-1)^{u_{j}}s^{(-1)^{j}v_{j}}\right),  \qquad
X_{(j)}=A\left((-1)^{u_{j}+u_{j-1}}s^{(-1)^{j}(v_{j}-v_{j-1})}\right).
\end{align}
Put  $\left\{\begin{array}{ll}
\tilde{p}=u_m,  \ \tilde{q}=u_{m-1}, &\text{if\ } 2\nmid m, \\
\tilde{q}=u_{m}, \ \tilde{p}=u_{m-1}, &\text{if\ } 2\mid m,
\end{array}\right.$
i.e.,
\begin{align}
\tilde{p}/\tilde{q}=[[k_{2},\ldots,k_{m}]]^{(-1)^{m-1}}.  \label{eq:tilde}
\end{align}
One can prove by induction on $m$ that
\begin{align}
\tilde{p}q-p\tilde{q}=1.    \label{eq:p-q}
\end{align}
We have
\begin{align}
\rho^{{\rm ne}}=A((-1)^{\tilde{p}}s^{-p}),  \quad
\rho^{{\rm sw}}=A((-1)^{\tilde{q}}s^{q}),   \quad
\rho^{{\rm se}}=A((-1)^{\tilde{p}+\tilde{q}}s^{q-p}).
\end{align}
Hence
\begin{align}
-{\rm tr}_{v}(\rho)=(-1)^{\tilde{q}}(s^{q}+s^{-q}),  \qquad
-{\rm tr}_{h}(\rho)=(-1)^{\tilde{p}}(s^{p}+s^{-p}).   \label{eq:tr-h}
\end{align}

For an integer $k$, denote
\begin{align}
\{k\}_s={\rm sign}(k)\cdot\sum\limits_{j=1}^{|k|}s^{2j-1-|k|}=
\left\{\begin{array}{ll}
(s^k-s^{-k})/(s-s^{-1}), &s\notin\{\pm 1\}, \\
ks^{k-1}, &s\in\{\pm 1\};
\end{array}\right.
\end{align}
it can be written as a polynomial in $s+s^{-1}$.
Noticing
\begin{align*}
A(s^k)=\{1-k\}_s\cdot X+\{k\}_s\cdot Y,
\end{align*}
we obtain
\begin{align}
\rho^{{\rm ne}}&=(-1)^{\tilde{p}}(\{1+p\}_s\cdot X+\{-p\}_s\cdot Y),   \label{eq:ne}  \\
\rho^{{\rm sw}}&=(-1)^{\tilde{q}}(\{1-q\}_s\cdot X+\{q\}_s\cdot Y),   \label{eq:sw} \\
\rho^{{\rm se}}&=(-1)^{\tilde{p}+\tilde{q}}(\{1+p-q\}_s\cdot X+\{q-p\}_s\cdot Y),  \label{eq:se}
\end{align}
and when $\{p\}_s\ne 0$,
\begin{align}
(\rho^{{\rm sw}},\rho^{{\rm se}})=-(\rho^{{\rm nw}},\rho^{{\rm ne}})\cdot\frac{(-1)^{\tilde{q}-1}}{\{p\}_s}
\left(\begin{array}{cc}
\{p+q\}_s & (-1)^{\tilde{p}}\{q\}_s \\
-(-1)^{\tilde{p}}\{q\}_s & \{p-q\}_s \\
\end{array}\right).    \label{eq:linear}
\end{align}

As pointed out in the second paragraph of this section, these relations are actually valid for arbitrary $X,Y$.

\begin{nota}
\rm For $P,X\in{\rm SL}(2,\mathbb{C})$, denote $PXP^{-1}$ by $P.X$.
\end{nota}

\begin{rmk}   \label{rmk:regular}
\rm For $Z,W\in{\rm SL}_0(2,\mathbb{C})$, call $(Z,W)$ {\it regular} if there exists $P\in{\rm SL}(2,\mathbb{C})$ such that $P.Z=A(1)$ and $P.W=A(s)$ with $s+s^{-1}=-{\rm tr}(ZW)$. It is easy to see that $(Z,W)$ is non-regular if and only if
$-{\rm tr}(ZW)=2a$ where $a\in\{\pm 1\}$ and $W\ne aZ$, and under this condition, there exists $P\in{\rm SL}(2,\mathbb{C})$ such that $P.Z=A(1)$ and $P.W\in\{S_a, S'_a\}$, where
\begin{align}
S_a=\left(\begin{array}{cc} ai & 1 \\ 0 & -ai \\ \end{array}\right),  \qquad
S'_a=\left(\begin{array}{cc} ai & 0 \\ 1 & -ai \\ \end{array}\right).
\end{align}

According to (\ref{eq:ne})--(\ref{eq:se}), the four pairs $(\rho^{{\rm nw}},\rho^{{\rm ne}})$, $(X,Y)$,
$(\rho^{{\rm nw}},\rho^{{\rm sw}})$ and $(\rho^{{\rm sw}},\rho^{{\rm se}})$ are simultaneously regular or not.

\end{rmk}

\section{Representations of Montesinos links}

Given a representation $\rho$, say $\rho$ is {\it reducible} if all the elements in ${\rm Im}(\rho)$ have a common eigenvector; in particular, say $\rho$ is {\it abelian} if ${\rm Im}(\rho)$ is abelian. Call $\rho$ {\it irreducible} if it is not reducible.

Given a Montesinos link $L=M(p_{1}/q_{1},\ldots, p_{r}/q_{r})$, denote
\begin{align}
\mu=\sum\limits_{\ell=1}^{r}\frac{q_{\ell}}{p_{\ell}}.
\end{align}
Suppose $\rho$ is a representation of $L$. Let $\rho_{\ell}$ denote its restriction to $[p_{\ell}/q_{\ell}]$, let $(X_{\ell},Y_{\ell})$ denote the generating pair of $\rho_{\ell}$, and assume
$-{\rm tr}(X_{\ell}Y_{\ell})=s_{\ell}+s_{\ell}^{-1}$. Up to conjugacy we may assume $X_1=A(1)$.

By Remark \ref{rmk:trace-v-h}, the $-{\rm tr}_{h}(\rho_{\ell})$'s have a common value. Take $a\in\mathcal{F}$ so that this value equals $a+a^{-1}$, where
\begin{align}
\mathcal{F}=\{e^{i\varphi}\colon 0\le\varphi\le\pi\}\cup\{s\in\mathbb{C}\colon |s|<1\}.
\end{align}
Then for each $\ell$,
by (\ref{eq:tr-h}),
$$(-1)^{\widetilde{p_{\ell}}}(s_{\ell}^{p_\ell}+s_{\ell}^{-p_\ell})=a+a^{-1},$$
with $\widetilde{p_{\ell}}/\widetilde{q_{\ell}}$ defined as in (\ref{eq:tilde});
switching $s_{\ell}$ with $s_\ell^{-1}$ if necessary, we have
\begin{align}
(-1)^{\widetilde{p_{\ell}}}s_{\ell}^{p_{\ell}}=a.   \label{eq:s-ell}
\end{align}

If $\rho$ is reducible and non-abelian, then $(\rho_{\ell}^{{\rm nw}},\rho_{\ell}^{{\rm ne}})$ is non-regular for at least one of the $\ell$'s, which is, by Remark \ref{rmk:regular}, equivalent to the condition that the
$(\rho_{\ell}^{{\rm nw}},\rho_{\ell}^{{\rm ne}})$'s are all non-regular. Hence, $a\in\{\pm 1\}$ and  $s_{\ell}\in\{\pm 1\}$ with $(-1)^{\widetilde{p_{\ell}}}s_{\ell}^{p_{\ell}}=a$. By (\ref{eq:ne}), $\rho_\ell^{{\rm ne}}$ and $Y_\ell$ are determined by each other; by (\ref{eq:linear}),
\begin{align}
(\rho_\ell^{{\rm sw}},\rho_\ell^{{\rm se}})&=-(\rho_\ell^{{\rm nw}},\rho_\ell^{{\rm ne}})
\cdot(-1)^{\widetilde{q_\ell}-1}s_{\ell}^{q_\ell}B(q_\ell/p_\ell), \label{eq:relation} \\
\text{with} \qquad B(w)&=\left(\begin{array}{ll} 1+w & a w \\ -a w & 1-w \\ \end{array}\right).
\end{align}
Observing  $B(w)B(w')=B(w+w')$, we have $\mu=0$ and
\begin{align}
\prod\limits_{\ell=1}^{r}(-1)^{\widetilde{q_\ell}-1}s_{\ell}^{q_{\ell}}=1.   \label{eq:reducible}
\end{align}
Conversely, when $a,s_1,\ldots,s_r\in\{\pm 1\}$ satisfy $(-1)^{\widetilde{p_{\ell}}}s_{\ell}^{p_{\ell}}=a,\ell=1,\ldots,r$ and (\ref{eq:reducible}) holds, an arbitrary $Y_1\ne aA(1)$ gives rise to
$\rho_\ell^{{\rm nw}}, \rho_\ell^{{\rm ne}},\rho_\ell^{{\rm sw}}, \rho_\ell^{{\rm se}}$ and then $X_\ell,Y_\ell$ for all $\ell$ through $(\rho_{\ell+1}^{{\rm nw}}, \rho_{\ell+1}^{{\rm ne}})=-(\rho_\ell^{{\rm nw}}, \rho_\ell^{{\rm ne}})$, (\ref{eq:relation}) and (\ref{eq:ne}).
The $X_\ell,Y_\ell$'s combine to define a non-abelian reducible representation of $L$.

If $a\in\{\pm 1\}$ and $(\rho_{\ell}^{{\rm nw}},\rho_{\ell}^{{\rm ne}})$ is regular for each $\ell$, then $\rho_\ell^{\rm ne}=-a\rho_\ell^{{\rm nw}}$ and $(\rho_\ell^{{\rm nw}},\rho_\ell^{{\rm sw}})$ is also regular. Conversely, given $X_2,\ldots,X_r$ and $s_1,\ldots,s_r$ such that $(-1)^{\widetilde{p_\ell}}s_\ell^{p_\ell}=a\in\{\pm 1\}$ and (adopting the convention that $X_{r+1}=X_1$)
\begin{align}
(X_\ell,X_{\ell+1}) \quad \text{is\ \ regular\ \ and\ } \quad
{\rm tr}(X_\ell X_{\ell+1})=(-1)^{\widetilde{q_\ell}}(s_{\ell}^{q_\ell}+s_\ell^{-q_\ell})     \label{eq:X}
\end{align}
for each $\ell$,  there is a unique representation $\rho$ of $L$ such that
$$\rho_{\ell}^{{\rm nw}}=X_{\ell}, \qquad \rho_\ell^{\rm ne}=-aX_\ell, \qquad
\rho_\ell^{\rm sw}=-X_{\ell+1},  \qquad \rho_\ell^{\rm se}=aX_{\ell+1};$$
actually, $Y_\ell$ is determined by $X_\ell,X_{\ell+1}$ and $s_\ell$ as in (\ref{eq:sw}):
$$(-1)^{\tilde{q_\ell}}(\{1-q_\ell\}_{s_\ell}\cdot X_\ell+\{q_\ell\}_{s_\ell}\cdot Y_\ell)=-X_{\ell+1}.$$

\begin{rmk}
\rm Note that, $\rho$ is abelian if and only if each $s_\ell\in\{\pm 1\}$; in this case $Y_\ell=s_\ell X_\ell$ and $X_{\ell+1}=(-1)^{\widetilde{q_\ell}+1}s_\ell^{q_\ell}X_\ell$.

Suppose $\rho$ is non-abelian so that $s_\ell\ne\pm 1$ for some $\ell$.
Let
\begin{align}
D=\left(\begin{array}{cc}
0 & 1 \\ -1 & 0 \\ \end{array}\right),  \qquad
E=\left(\begin{array}{cc}
0 & i \\ i & 0 \\ \end{array}\right)=A(1)D.
\end{align}
Note that $A(1)^2=D^2=E^2=-I$, and each $X\in{\rm SL}_0(2,\mathbb{C})$ can be  written as $X=aA(1)+bD+cE$ for a unique triple $(a,b,c)\in\mathbb{C}^3$ with $a^2+b^2+c^2=1$.

Suppose
\begin{align}
X_{\ell}=a_{\ell}A(1)+b_{\ell}D+c_{\ell}E,
\end{align}
with
\begin{align}
a_\ell^2+b_\ell^2+c_\ell^2=1. \label{eq:X-ell}
\end{align}
Then $(a_1,b_1,c_1)=(1,0,0)$, and
\begin{align}
(-1)^{\widetilde{q_\ell}}(s_{\ell}^{q_\ell}+s_\ell^{-q_\ell})=
-{\rm tr}(X_{\ell}X_{\ell+1})=2(a_{\ell}a_{\ell+1}+b_{\ell}b_{\ell+1}+c_{\ell}c_{\ell+1}).  \label{eq:tr-ell}
\end{align}
Since $(p_\ell,q_\ell)=1$, we have that $(-1)^{\tilde{q_\ell}}s_\ell^{q_\ell}\in\{\pm 1\}$ if and only if $s_\ell\in\{\pm 1\}$.
Hence the requirement that $(X_\ell,X_{\ell+1})$ is regular is equivalent to
\begin{align}
(a_{\ell+1},b_{\ell+1},c_{\ell+1})=(-1)^{\tilde{q_\ell}}s_\ell^{q_\ell}\cdot(a_{\ell},b_{\ell},c_{\ell})
\qquad\text{if}\quad s_\ell\in\{\pm 1\}.  \label{eq:regular}
\end{align}
Let $h\in\{1,\ldots,r\}$ be the smallest $k$ such that $s_1,\ldots,s_{k-1}\in\{\pm 1\}$ and $s_{k}\notin\{\pm 1\}$.
Then 
$$X_\ell=(-1)^{\tilde{q}_1+\cdots+\tilde{q}_{\ell-1}}s_1^{q_1}\cdots s_{\ell-1}^{q_{\ell-1}}A(1), \qquad 2\le\ell\le h.$$
Switching $s_h$ with $s_h^{-1}$ if necessary, we may assume $s_h\in\mathcal{F}$.
Conjugating by a diagonal matrix if necessary, we may assume
\begin{align}
X_{h+1}=
(-1)^{\tilde{q}_1+\cdots+\tilde{q}_{h}}s_1^{q_1}\cdots s_{h-1}^{q_{h-1}}A(s_h^{q_h}).
\end{align}
Note that $(a_{h+1},b_{h+1},c_{h+1})$ is determined by this equality.

It is easy to see how to find all of the $(a_\ell,b_\ell,c_\ell)$'s satisfying (\ref{eq:X-ell}), (\ref{eq:tr-ell}) and (\ref{eq:regular}), so as to find all of the tuples $(X_1,\ldots,X_r)$ satisfying (\ref{eq:X}).
\end{rmk}

In the remaining part, suppose $\rho$ is irreducible and $a\notin\{\pm 1\}$; we can assume $Y_1=A(s_1)$.
Write $a=|a|e^{i\theta}$. Then, for each $\ell$,
\begin{align}
s_{\ell}=|a|^{1/p_{\ell}}
\exp\left(\frac{i}{p_{\ell}}(\theta+\widetilde{p_{\ell}}\pi+2n_{\ell}\pi)\right) \qquad  \text{for\ \ some}\quad n_{\ell}\in\mathbb{Z}.
\end{align}
By (\ref{eq:linear}),
\begin{align}
(\rho^{{\rm sw}},\rho^{{\rm se}})=-(\rho^{{\rm nw}},\rho^{{\rm ne}})C((-1)^{\widetilde{q_\ell}-1}s_\ell^{q_\ell}),
\end{align}
with
\begin{align}
C(w)=\frac{1}{a-a^{-1}}\left(\begin{array}{ll} aw-a^{-1}w^{-1} & w-w^{-1} \\ -(w-w^{-1}) & aw^{-1}-a^{-1}w \\ \end{array}\right).
\end{align}
Observing $C(w)C(w')=C(ww')$, we are led to
\begin{align*}
\prod\limits_{\ell=1}^{r}(-1)^{\widetilde{q_{\ell}}-1}s_{\ell}^{q_{\ell}}=1,
\end{align*}
which is, by (\ref{eq:p-q}), equivalent to the pair of equations
\begin{align}
|a|^{\mu}=1
\end{align}
and
\begin{align}
\mu\theta+\pi\sum\limits_{\ell=1}^{r}
\frac{2n_{\ell}q_{\ell}+1}{p_{\ell}}=(2n+r)\pi \qquad \text{for\ some} \quad  n\in\mathbb{Z}.
\end{align}

If $\mu\ne 0$, then $|a|=1$  and
\begin{align}
\theta=\frac{\pi}{\mu}\left(2n+r-\sum\limits_{\ell=1}^{r}\frac{2n_{\ell}q_{\ell}+1}{p_{\ell}}\right)\in(2k\pi,(2k+1)\pi) \qquad
\text{for\ \ some\ \ }k\in\mathbb{Z}.
\end{align}
We may assume
\begin{align}
0\le n_\ell<p_\ell, \quad \ell=1,\ldots,r  \qquad \text{and} \qquad 0\le n<N(\mu),   \label{ineq:n}
\end{align}
where $N(\mu)$ is the numerator of $\mu$.

If $\mu=0$, then $a$ can be arbitrary, and $(n_{1},\ldots,n_{r})$ should satisfy
\begin{align}
\sum\limits_{\ell=1}^{r}\frac{2n_{\ell}q_\ell+1}{p_{\ell}}=2n+r.   \label{eq:final}
\end{align}

\begin{thm} \label{thm:main}
Each conjugacy class of trace-free representations of the Montesinos link $M(p_{1}/q_{1},\ldots p_{r}/q_{r})$ contains a unique representation
$\rho$ such that $X_1=A(1)$, $-{\rm tr}_h(\rho_1)=a+a^{-1}$ with $a\in\mathcal{F}$ and
\begin{enumerate}
  \item[\rm(i)] if $\rho$ is abelian, then it is determined by $a\in\{\pm 1\}$ and a unique tuple $(s_1,\ldots,s_r)\in\{\pm 1\}^{r}$ satisfying {\rm(\ref{eq:s-ell})};
  \item[\rm(ii)] if $\rho$ is reducible but not abelian, then $\mu=0$ and up to the two choices $Y_1\in\{S_a,S'_a\}$,
       $\rho$ is determined by $a\in\{\pm 1\}$ and a unique tuple $(s_1,\ldots,s_r)\in\{\pm 1\}^{r}$ satisfying {\rm(\ref{eq:s-ell})} and {\rm(\ref{eq:reducible})};
  \item[\rm(iii)] if $\rho$ is irreducible with $a\in\{\pm 1\}$, then $\rho$ is determined by $a$ and a unique tuple
       $(h;s_1,\ldots,s_r;a_2,b_2,c_2;\ldots;a_r,b_r,c_r)$ such that $s_1,\ldots,s_{h-1}\in\{\pm 1\}$, $s_h\in\mathcal{F}-\{\pm 1\}$,
       $X_{h+1}=(-1)^{\tilde{q}_1+\cdots+\tilde{q}_{h}}s_1^{q_1}\cdots s_{h-1}^{q_{h-1}}A(s_h)$, and {\rm(\ref{eq:s-ell})}, {\rm(\ref{eq:X-ell})}, {\rm(\ref{eq:tr-ell})}, {\rm(\ref{eq:regular})} hold;
  \item[\rm(iv)] if $\mu=0$ and $\rho$ is irreducible with $a\notin\{\pm 1\}$, then $Y_1=A(s_1)$, and $\rho$ is determined by $a$ and
       a unique tuple $(n,n_1,\ldots,n_r)\in\mathbb{Z}^{r+1}$ satisfying {\rm(\ref{ineq:n})} and {\rm(\ref{eq:final})};
  \item[\rm(v)] if $\mu\ne 0$ and $\rho$ is irreducible with $a\notin\{\pm 1\}$, then $Y_1=A(s_1)$, $|a|=1$ and $\rho$
       is determined by a unique tuple $(n,n_1,\ldots,n_r)\in\mathbb{Z}^{r+1}$ satisfying {\rm(\ref{ineq:n})} and $2n+r-\sum\limits_{\ell=1}^{r}(2n_{\ell}q_{\ell}+1)/p_{\ell}\in(2k\mu,(2k+1)\mu)$ for some $k\in\mathbb{Z}$.
\end{enumerate}
\end{thm}

\begin{rmk}
\rm As pointed out in \cite{Nag13} (see Page 2), the case (ii) never occur if $M(p_1/q_1,\ldots,p_r/q_r)$ is a knot.
\end{rmk}

\begin{rmk}
\rm
Based on this classifying result, without too much difficulty, one may determine the trace-free ${\rm SU}(2)$-representations of a Montesinos link.

Suppose $\varrho$ is a trace-free ${\rm SU}(2)$-representation of $M(p_{1}/q_{1},\ldots p_{r}/q_{r})$. Let $\widetilde{X}_\ell, \widetilde{Y}_\ell$ denote the generating pair of the restriction of $\varrho$ to $[p_\ell/q_\ell]$. Up to conjugacy we may assume $\widetilde{X}_1=A(1)$.

If $\varrho$ is reducible, then it must be abelian, and the result is the same as Theorem \ref{thm:main} (i).

Now suppose $\varrho$ is irreducible. Since $\varrho$ is already a trace-free ${\rm SL}(2,\mathbb{C})$-representation, by Theorem \ref{thm:main} there exists a (diagonal) $P\in{\rm SL}(2,\mathbb{C})$ such that $\rho=P.\varrho$ (the homomorphism sending each $x\in\pi_1(S^3-L)$ to $P.\varrho(x)$) such that $X_1=A(1)$, $-{\rm tr}_h(\rho_1)=a+a^{-1}$ with $a\in\mathcal{F}$, and one of (iii)--(v) holds. Since $\varrho$ takes values in ${\rm SU}(2)$, we have $a=e^{i\varphi}$ with $0\le\varphi\le \pi$.
In case (iii), since both $\widetilde{X}_{h+1}$ and $X_{h+1}=P.\widetilde{X}_{h+1}$ are unitary, we have that actually $P$ is unitary, hence $a_\ell,b_\ell,c_\ell\in\mathbb{R}$, and $\varrho$ is conjugate to $\rho$ as ${\rm SU}(2)$-representations.
Similarly, in case (iv) or (v), $\varrho$ is also conjugate to $\rho$ as ${\rm SU}(2)$-representations.
\end{rmk}


\begin{thebibliography}{}

\bibitem{FKC13}
Y. Fukumoto, P. Kirk, J. Pinz\'on-Caicedo, \\
\textsl{Traceless ${\rm SU}(2)$-representations of 2-stranded tangles}. arxiv: 1305.6042.

\bibitem{HHK13}
M. Hedden, C.M. Herald, P. Kirk,
\textsl{The pillowcase and perturbations of traceless representations of knot groups}.
Geom. Topology 18 (2013), no. 1, 211--287.







\bibitem{KM11}
P.B. Kronheimer, T.S. Mrowka,
\textsl{Knot homology groups from instantons}.
J. Topology 4 (2011), no. 4, 835--918.



\bibitem{Lin92}
X.-S. Lin,
\textsl{A knot invariant via representation spaces}.
J. Diff. Geom. 35 (1992), 337--357.





\bibitem{Mag80}
W. Magnus,
\textsl{Rings of Fricke characters and automorphism groups of free groups}.
Math. Z. 170 (1980), 91--103.


\bibitem{Nag13}
F. Nagasato,
\textsl{On the trace-free characters}.
Kyoto University Research Information Repository 1836 (2013), 110--123.


\bibitem{NY12}
F. Nagasato, Y. Yamaguchi,
\textsl{On the geometry of the slice of trace-free ${\rm SL}_{2}(\mathbb{C})$-characters of a knot group}.
Math. Ann. 354 (2012), 967--1002.


\bibitem{Zen11}
R. Zentner,
\textsl{Representation spaces of pretzel knots}.
Algebr. Geom. Topol. 11 (2011), 2941--2970.



\end{thebibliography}
\end{document}